\def\udcs{517.16 + 517.574 + 517.555}
\newtheorem{proposition}{Утверждение}
\newtheorem{lemma}{Лемма}
\newcommand{\R}{\mathbb{R}}
\newcommand{\Z}{\mathbb{Z}}
\newcommand{\bC}{\mathbb{C}}
\newcommand{\N}{\mathbb{N}}
\renewcommand{\leq }{\leqslant }
\renewcommand{\geq }{\geqslant }
\renewcommand{\d}{{\rm \, d}}
\DeclareMathOperator{\Inc}{Inc}
\newtheorem{Conjec}{Гипотеза}
\begin{document}
УДК \udcs
\thispagestyle{empty}

\title[Три эквивалентные гипотезы о некоторых оценках интегралов\dots]{Три эквивалентные гипотезы\\ об одной оценке интегралов}

\author{\large Р.\,А. Баладай, Б.\,Н. Хабибуллин}

\address{Рустам Алексеевич Баладай,
\newline\hphantom{iii} Башкирский государственный университет,
\newline\hphantom{iii} ул. З.~Валиди, 32, 
\newline\hphantom{iii} 450074, г. Уфа, Россия}
\email{baladaichik@mail.ru}
 
\address{Булат Нурмиевич Хабибуллин, 
\newline\hphantom{iii} Башкирский государственный университет,
\newline\hphantom{iii} ул. З. Валиди, 32, 
\newline\hphantom{iii} 450074, г. Уфа, Россия}
\email{Khabib-Bulat@mail.ru}

\thanks{\sc R.\,A.Baladai, B.\,N. Khabibullin, 
Three equivalent conjectures on an estimate of integrals.}
\thanks{\copyright \ Баладай Р.,А., Хабибуллин Б.\,Н. 2010}
\thanks{\rm Работа поддержана РФФИ (гранты 09-01-00046-а, 08-01-97023-р\_поволжье\_а) и программой ``Государственная поддержка ведущих научных школ'', проект НШ--3081.2008.1.}
\thanks{\it Поступила 15 июня  2010 г.}

\maketitle {\small
\begin{quote}
\noindent{\bf Аннотация. } В работе выдвигается гипотеза о точной оценке некоторого определенного несобственного интеграла, зависящего от параметра $\lambda \in (0,+\infty)$,  через заданную оценку другого определенного интеграла, зависящего от двух параметров $t\in [0,+\infty)$ и $\lambda$. Такая точная оценка доказана здесь для $\lambda\leq 1$. Кроме того, получена некоторая оценка и при $\lambda>1$. Последняя оценка, по-видимому, не точная. Мы приводим также две гипотезы, эквивалентные исходной. Истоки наших гипотез --- экстремальные задачи для целых, мероморфных и плюрисубгармонических функций нескольких переменных. 
\medskip

\noindent{\bf Ключевые слова:}{несобственный интеграл, оценка, неравенство, целая функция, мероморфная функция, плюрисубгармоническая фу\-н\-к\-ция, проблема Пэли}
\medskip
\end{quote}
\begin{quote}
\noindent{\bf Abstract. } We offer a conjecture on sharp estimation of a definite improper integral depend on a parameter $\lambda \in (0,+\infty)$ by means of given estimate of other definite integral depend on parameters $t\in [0,+\infty)$ and $\lambda$. Such sharp estimate is proved for $\lambda \leq 1$. Besides, an estimate is obtained for $\lambda >1$. The last estimate is not exact seemingly. We give also two conjectures that are equivalent to the original conjecture. Sources of our conjectures are extremal problems for entire, meromorphic, and plurisubharmonic functions of several variables.
\medskip

\noindent{\bf Keywords:}  improper integral, estimate, in\-e\-q\-u\-a\-l\-i\-ty, entire function, me\-r\-o\-m\-o\-r\-ph\-ic function, plurisubharmonic function, Paley problem
\end{quote} }

\section{Введение}

<<{\it Положительность\/}>> (<<{\it отрицательность\/}>>) всюду в работе означает ``$\geq 0$\,'' (соотв.\footnote{Далее сокращение <<соотв.>> используется для наречия или предлога <<соответственно>>.} ``$\leq 0$\,''), где отношение порядка $\leq$ и нулевой элемент $0$ на рассматриваемом множестве, как правило, естественны по контексту.

Через $\N$, $\Z$, $\R$, $\bC$ обозначаем множества всех соотв. {\it натуральных}, {\it целых}, {\it вещественных\/} и {\it комплексных\/}  чисел; $[-\infty, +\infty]:=\{-\infty\}\cup \R \cup \{+\infty\}$   ---  {\it расширенная вещественная ось\/} с естественным отношением порядка. 
 
Функция $\phi \colon I\to [-\infty, +\infty]$, $I\subset [-\infty , +\infty]$, {\it возрастающая\/} (соотв. {\it убывающая\/}), если для любых $x_1, x_2 \in I$ неравенство $x_1\leq x_2$ влечет за собой {\it нестрогое неравенство\/} $\phi (x_1)\leq \phi (x_2)$ (соотв. $\phi (x_1)\geq \phi (x_2)$). Если же для любых $x_1,x_2 \in I$ из {\it строгого неравенства\/} $x_1<x_2$  следует {\it строгое неравенство\/} $\phi (x_1)<\phi (x_2)$ (соотв. $\phi(x_1)>\phi(x_2)$), то $\phi$ --- {\it строго возрастающая\/} (соотв. {\it строго убывающая\/}) функция на $I$. 
Через $\Inc (I)$ обозначаем множество всех возрастающих функций на $I$. Верхний индекс ``$^+$'', наряду с тем, что обозначает положительную часть $a^+:=\max \{a,0\}$ (соотв. $\phi^+:=\sup\{\phi, 0\}$) числа $a$ или функции $\phi \colon \cdot \to [-\infty , +\infty]$, используется также для обозначения всех положительных элементов из множества или класса функций. Так, подмножество всех положительных функций из $\Inc (I)$ обозначаем $\Inc^+(I)$.

Функция $\phi \colon [a,b)\to \R$, $[a,b)\subset [0,+\infty]$, {\it выпуклая относительно\footnote{Если не указано основание логарифма у $\log$, то оно равно числу $e$, т.\,е. это функция $\ln$.}\/} $\log$, если функция $x\mapsto \phi(e^x)$ выпуклая на интервале $[\log a, \log b)\subset [-\infty , +\infty]$.    

В сязи с исследованием в работе \cite[Теоремы 1, 2]{Kh99} экстремальных задач о росте плюрисубгармонических, целых и мероморфных функций  комплексных переменных в завершение обзора \cite[Commentary]{Kh02} была выдвинута     
\begin{Conjec}\label{con:1} Пусть функция $S\in \Inc^+\bigl([0, +\infty)\bigr)$  выпуклая относительно\/ $\log$ и для $\lambda \geq 1/2$ и $2\leq n\in \N$ выполнено неравенство
\begin{equation}\label{conj:con}
\int_0^1S(tx)(1-x^2)^{n-2} x\d x\leq t^{\lambda} \; \text{ для всех \; $t\in [0,+\infty)$}.
\end{equation}
Тогда
\begin{equation}\label{conj:est}
\int_0^{+\infty} S(t)\frac{t^{2\lambda-1}}{(1+t^{2\lambda})^2} \d t\leq  \frac{\pi (n-1)}{2\lambda}\prod_{k=1}^{n-1}\Bigl(1+\frac{\lambda}{2k}\Bigr).
\end{equation}
\end{Conjec}
Если Гипотеза \ref{con:1} верна, то в точности оценки \eqref{conj:est} при условии 
\eqref{conj:con} легко убедиться на примере возрастающей выпуклой относительно $\log$ функции
\begin{equation*}
S_{\lambda,n}(t):=2(n-1)\prod_{k=1}^{n-1} \Bigl(1+\frac{\lambda}{2k}\Bigr)\, t^{\lambda}=:c_{\lambda,n}t^{\lambda}, \quad  t\in [0, +\infty),\quad \lambda \geq\frac{1}{2}	\, ,
\end{equation*}
где постоянная $c_{\lambda,n}$ определена последним равенством.
Для $S_{\lambda, n}$ в  \eqref{conj:con} и \eqref{conj:est} достигается равенство. Действительно, в обозначениях $\Gamma$ и $\mathrm B$ для классических гамма- и бета-функций Эйлера \cite[гл.~VII, \S~1]{LSh} получаем (см. \eqref{conj:con})
\begin{multline}\label{with:B}
\int_0^1	S_{\lambda,n}(tx)(1-x^2)^{n-2} x\d x =\frac{c_{\lambda, n}}{2}\, t^{\lambda}
\int_0^1	x^{(\lambda/2+1)-1}(1-x)^{(n-1)-1} \d x \\=\frac{c_{\lambda, n}}{2}\, t^{\lambda} \mathrm B (\lambda/2+1,n-1) 
=\frac{c_{\lambda, n}}{2}\, t^{\lambda}\frac{\Gamma(\lambda/2
+1)\ \Gamma(n-1)}{\Gamma(\lambda/2+n)}\\
=\frac{c_{\lambda, n}}{2}\, t^{\lambda} \frac{(\lambda/2)\Gamma(\lambda/2)\ (n-2)!}{\Gamma(\lambda/2)\ (\lambda/2)\ (\lambda/2+1)\cdot\ldots\cdot(\lambda/2+(n-1))}\\
=c_{\lambda,n}\,\dfrac{1}
{2(n-1)\prod\limits^{n-1}_{k=1}\Bigl(1+\dfrac{\lambda}{2\,k}
\Bigr)}\, t^{\lambda}= t^{\lambda},
\end{multline}
а также  (см. \eqref{conj:est})
\begin{multline}\label{eq:Sl}
\int_0^{+\infty} S_{\lambda,n}(t)\frac{t^{2\lambda-1}}{(1+t^{2\lambda})^2} \d t=
-\frac{c_{\lambda, n}}{2\lambda}\int_0^{+\infty}t^{\lambda}\d \frac{1}{1+t^{2\lambda}} \\
=\frac{c_{\lambda, n}}{2\lambda}\int_0^{+\infty}\frac{1}{1+t^{2\lambda}}\d t^{\lambda}=\frac{c_{\lambda, n}}{2\lambda}\int_0^{+\infty}\frac{1}{1+s^2}\d s =
\frac{\pi (n-1)}{2\lambda}\prod_{k=1}^{n-1}\Bigl(1+\frac{\lambda}{2k}\Bigr).
\end{multline}

\section{Случай лишь положительной возрастающей функции $S$}

В этом разделе мы получим некоторые оценки для интеграла \eqref{conj:est} от функции $S$ \underline{без каких-либо условий выпуклости}.  Так, при $\lambda \leq 1$ справедлив следующий более общий вариант Гипотезы \ref{con:1}.
\begin{proposition}  Пусть $S\in \Inc^+\bigl([0, +\infty)\bigr)$ --- непрерывная функция и для 
\begin{equation*}
	\frac{1}2\leq \lambda \leq 1
\end{equation*}
 и $2\leq n\in \N$ выполнено \eqref{conj:con}. Тогда имеет место неулучшаемая оценка \eqref{conj:est}.
\end{proposition}
\begin{proof} Сначала дадим грубую оценку  функции $S$ при \underline{произвольном} значении параметра $\lambda$. Такая оценка, в частности, необходима нам для сходимости возникающих в ходе  доказательства различных интегралов и для существования некоторых пределов. Применения этой Леммы \ref{lem:1} далее не объявляются. 
\begin{lemma}\label{lem:1} Пусть $S\in \Inc^+\bigl([0, +\infty)\bigr)$, $\lambda \geq 0$ и для 
$2\leq n\in \N$ выполнено \eqref{conj:con}. Тогда
\begin{equation*}\label{est:S}
S(x)\leq 2(n-1)\left(1+\frac{\lambda}{2(n-1)}\right)^{n-1}
\Bigl(1+\frac{2(n-1)}{\lambda}\Bigr)^{\lambda/2} x^\lambda
\quad \text{ при  всех $x\geq 0$}.	
\end{equation*}
\end{lemma}
\begin{proof}[Доказательство леммы\/ {\rm \ref{lem:1}}] Для произвольного числа $a\in [0,1]$ в силу возрастания $S$ имеем
\begin{multline*}
S(at)\leq \int_a^1S(tx)(1-x^2)^{n-2} x\d x\biggm/
\int_a^1(1-x^2)^{n-2} x\d x\\\stackrel{\eqref{conj:con}}{\leq}
t^{\lambda}\biggm/\int_a^1(1-x^2)^{n-2} x\d x =\frac{2(n-1)}{(1-a^2)^{n-1}}\, t^{\lambda}=
2(n-1)\frac{1}{a^{\lambda}(1-a^2)^{n-1}}\, (at)^{\lambda}.
 \end{multline*}
Используя замену $x=at$, а затем минимизируя по $a\in [0,1]$ дробь в правой части, получаем требуемое  \eqref{est:S}.  
\end{proof}

Далее нам удобнее перейти к новым переменным: обозначить переменную $t$ через $r$, $rx$ заменить на $t$, а вместо  функции $S$ рассмотреть функцию 
\begin{equation*}
T:=\dfrac{1}{2(n-1)}\, S.	
\end{equation*}

Тогда неравенство \eqref{conj:con} записывается в виде 
\begin{equation}\label{est:an} 
2(n-1)\int_{0}^{r}{T(t)(r^2-t^2)^{n-2}t\d t \leq r^{\lambda+2(n-1)}}
\quad \text{\it для любых\/ $r\geq 0$},
\end{equation}
а неравенство \eqref{conj:est} перейдет в требующее доказательства неравенство
\begin{equation}\label{est:inf}
\int_0^{+\infty}T(t)\frac{t^{2\lambda-1}}{(1+t^{2\lambda})^{2}}
\d t \, {\stackrel{\text{?}}{\leq}} 
\, \frac{\pi}{4\lambda}\cdot  
\prod_{k=1}^{n-1}\Bigl(1+\frac{\lambda}{2k}\Bigr).
\end{equation}
 Для непрерывной функции $f$ на $[0,+\infty)$ введем интегральный оператор
\begin{equation}\label{df:Ikf} 
I_k(r;f):=\int_{0}^{r}f(t)(r^2-t^2)^ktdt, \quad r\geq 0, \quad k\in \N.
\end{equation}
В частности, интеграл из \eqref{est:an} есть в точности $I_{n-2}(r;T)$.

Далее, введем в рассмотрение операторы $L$ и $M$, действующие на дифференцируемые функции $g\colon (0,+\infty) \to \R$ по правилу
\begin{equation}\label{df:LMr} 
L[g](r):=\frac{1}{r}\cdot g'(r), \quad  M[g](r):=
-\frac{d}{dt} \left(\frac{1}{t}\cdot g(t)\right)\biggm|_{t=r}\, , \quad r>0.  
\end{equation}
Легко показать, что 
\begin{align}
L^p[I_k(\cdot ;f)](r)&=2^p \frac{k!}{(k-p)!}\, I_{k-p}(r;f), \quad p\leq k, \label{Lkp3}\\ 
L^{k+1}[I_k(\cdot ;f)](r)&=2^k k!\, f(r), \label{Lkp4}
\end{align}
где $p$-ая степень оператора означает его $k$-кратную суперпозицию.
\begin{lemma}\label{lem:2} 
Предположим, что функции $g$ и $\varphi$ соотв.\/ $p$ раз и $q+1$ раз непрерывно дифференцируемы на $(0,+\infty)$ и существуют пределы 
\begin{equation*}
  \lim_{t\to 0,+\infty} \frac{1}{t}\, L^{p-1}[g](t)\cdot M^q[\varphi](t)=0. 
\end{equation*} 
Тогда
\begin{equation*}
 \int_{0}^{+\infty}L^p[g](t) \, M^q[\varphi](t) \d t=\int_{0}^{+\infty}L^{p-1}[g](t)\, M^{q+1}[\varphi](t)\d t, 
\end{equation*}
если интегралы сходятся.
\end{lemma}

Лемму \ref{lem:2} легко доказать интегрированием по частям.

\begin{lemma}\label{lem:3} Пусть 
\begin{equation}\label{df:varl} \varphi_\lambda(t):=\frac{t^{2\lambda-1}}{(1+t^{2\lambda})^2},\quad  t\geq 0. 
\end{equation}

Тогда для всех\/ $q=0,1, \dots$ имеют место соотношения
\begin{align}
 M^q[\varphi_\lambda](t)&=O(t^{-2\lambda-1-2q}) \quad 
 \text{ при }\; t\to +\infty ,& \label{O:inf}\\ 
M^q[\varphi_\lambda](t)&=O(t^{2\lambda-1-2q}) \; \; \quad 
\text{ при }\;  t\to 0 .& \label{O:0}
\end{align}
 Кроме того, при\/ $\lambda \leq 1$ выполнено условие положительности
 \begin{equation}\label{M:pos} 
M^q[\varphi_\lambda](t)\geq 0 \quad \text{для всех }\; t\geq 0.
\end{equation}
\end{lemma}
\begin{proof} Соотношения\/ \eqref{O:inf}  и \eqref{O:0} можно получить, предста\-в\-л\-яя фу\-н\-к\-ц\-ию $\varphi_\lambda$  в виде ряда по степеням от соотв. $t^{-\lambda}$ при $t\to +\infty$ и  $t^{\lambda}$ при $t\to 0$.
  
Для доказательства  \eqref{M:pos} при $\lambda \leq 1$ введем
для $0\leq\ \beta\leq2$   класс функций $K(\beta)$,  являющихся линейными комбинациями с положительными коэффицентами функций вида
\begin{equation*} 
\psi(t)=\frac{1}{t^\alpha(1+t^\beta)^k}\, ,\quad  \alpha \geq -1, 
\; k\geq 0. 
\end{equation*}

Например, для нашей функции из \eqref{df:varl} имеем $\varphi_\lambda\in K(2\lambda)$ при $\lambda\leq 1$.
Поэтому для получения \eqref{M:pos} достаточно показать, что $M[\psi] \in K(\beta)$ для $\psi\in K(\beta)$. Для $0\leq\beta\leq2$ и $\alpha\geq-1$ получаем
 \begin{equation*} 
 M[\psi](t)=-\frac{d}{dt} \frac{1}{t^{1+\alpha}{(1+t^\beta)}^k}= \frac{1+\alpha}{t^{2+\alpha}{(1+t^\beta)}^k} + \frac{k\beta}{t^{2-\beta+\alpha}{(1+t^\beta)}^{k+1}}\in K(\beta).  
\end{equation*} 
 Лемма \ref{lem:3} доказана.
 \end{proof}
 \begin{lemma}\label{lem:4} Для всех $k=0,1,\dots$ и $p=0,1,\dots , k+1$
 \begin{equation}\label{Lpk} L^p[I_k(\cdot;T)](r)=O\left(r^{\lambda+2(k+1-p)}\right),  \quad r\rightarrow 0,+\infty. 
 \end{equation}
 \end{lemma}
 \begin{proof} Для $p=k+1$ из равенства \eqref{Lkp4}  имеем
  \begin{equation*} 
  L^{k+1}[I_k(\cdot;T)](r)=2^k\cdot k!\,T(r)
  =O(r^\lambda)
  =O\bigl(r^{\lambda+2(k+1-p)}\bigr), \quad 
   r\rightarrow 0,+\infty. 
   \end{equation*}  
   Для $p\leq k$ из равенства \eqref{Lkp3} следует
\begin{multline*} 
   L^p[I_k(\cdot ;T)](t) 
  =O\bigl(I_{k-p}(r;T)\bigr)
   = O\Bigl(\,\int_0^r t^{\lambda+1}(r^2-t^2)^{k-p} \d t \, \Bigr) 
  \\
   = O\Bigl(\,r^{2(k-p)}\int_0^r t^{\lambda+1}\d t\,\Bigr)
    = O\bigl(r^{\lambda+2(k+1-p)}\bigr),\quad  r\rightarrow 0,+\infty.  
  \end{multline*}
  Лемма \ref{lem:4} доказана.
  \end{proof}

  \begin{lemma}\label{lem:5} Для $p+q=n-1$, где $p\geq 0$ и $q\geq 0$,
  имеем 
  \begin{equation*} 
  \lim_{t\rightarrow 0,+\infty}\frac{1}{t}\, L^{p-1}[I_{n-2}(\cdot,T)](t)\, M^q[\varphi_\lambda](t)=0. 
  \end{equation*}
\end{lemma}  
   \begin{proof}
   По Леммам \ref{lem:3} и \ref{lem:4} при $t\to 0$ получаем
  \begin{equation*} \frac{1}{t}\, L^{p-1}[I_{n-2}(\cdot ,T)](t)\, M^q[\varphi_\lambda](t)=\frac{1}{t}\, O\bigl(t^{\lambda+2(n-p)}\bigr)\cdot O\bigl(t^{2\lambda-1-2q}\bigr)=O(t^{3\lambda}). \end{equation*} 
   По тем же Леммам \ref{lem:3} и \ref{lem:4} при $t\to +\infty$ имеем
 \begin{equation*} \frac{1}{t}\, L^{p-1}[I_{n-2}(\cdot,T)](t)\, M^q[\varphi_\lambda](t)=\frac{1}{t}\, O\bigl(t^{\lambda+2(n-p)}\bigr)\cdot O\bigl(t^{-2\lambda-1-2q}\bigr)=O(t^{-\lambda}). \end{equation*}
 Лемма \ref{lem:5} доказана.
 \end{proof}
 Закончим доказательство Гипотезы \ref{con:1} для $\lambda\leq 1$.
 По \eqref{Lkp4} левая часть \eqref{est:inf} равна
 \begin{multline*}
J\stackrel{\eqref{df:varl}}{:=}\int_0^{+\infty} T(t)\varphi_\lambda(t)\d t\\
\stackrel{\eqref{df:LMr}-\eqref{df:Ikf}}{=}\frac{1}{2^{n-2}\cdot(n-2)!}\int_0^{+\infty}{L^{n-1}[I_{n-2}(\cdot,T)](t)\,M^0[\varphi_\lambda](t)\d t} 
\end{multline*}
 Используем $n-1$ раз Лемму \ref{lem:2}  и получим
 \begin{multline*}  J=\frac{1}{2^{n-2}\cdot(n-2)!} \int_0^{+\infty} L^0[I_{n-2}(\cdot,T)](t)\,M^{n-1}[\varphi_\lambda](t)\d t \\  = \frac{1}{2^{n-2}\cdot(n-2)!} \int_{0}^{+\infty}{\left(\int_{0}^{t}T(\tau)(t^2-\tau^2)^{n-2}\tau \d \tau\right)}M^{n-1}[\varphi_\lambda](t)\d t. 
 \end{multline*}
 Отсюда по  соотношение \eqref{M:pos}  Леммы \ref{lem:2} из условия \eqref{est:an} ввиду положительности функции $M^{n-1}[\varphi_\lambda]$ для $\lambda\leq 1$  получаем
 \begin{multline*} 
 J\leq \frac{1}{2^{n-1}\cdot(n-1)!}
\int_0^{+\infty} t^{\lambda+2(n-1)}\, M^{n-1}[\varphi_\lambda](t) \d t \\  =\frac{1}{2^{n-1}\cdot(n-1)!}\int_0^{+\infty}L^0\bigl[t^{\lambda+2(n-1)}\bigr]\, M^{n-1}[\varphi_\lambda](t) \d t.  \end{multline*}
 Вновь используя $n-1$ раз Лемму \ref{lem:2},  устанавливаем оценку
 \begin{multline*}
  J\leq \frac{1}{2^{n-1}\cdot(n-1)!}
\int_0^{+\infty}L^{n-1}\bigl[t^{\lambda+2(n-1)}\bigr]\varphi_\lambda(t)\d t  \\  =\frac{1}{2^{n-1}\cdot(n-1)!}\int_0^{+\infty}\frac{t^{3\lambda-1}}{(1+t^{2\lambda})^2}dt \cdot \prod_{k=1}^{n-1}{(\lambda+2k)}\\
=\frac{1}{2\lambda}\int_{0}^{+\infty}{\frac{t^{1/2}}{(1+t)^2}dt\cdot \prod_{k=1}^{n-1}{\Bigl(1+\frac{\lambda}{2k}\Bigr)}}. \end{multline*}
 Отсюда, интегрируя по  частям последний интеграл, получаем 
\begin{equation*} 
 J\leq \frac{1}{4\lambda}\int_{0}^{+\infty}{\frac{t^{-1/2}}{(1+t)}\d t \cdot \prod_{k=1}^{n-1}{\Bigl(1+\frac{\lambda}{2k}\Bigr)}}. 
 \end{equation*}
 Интеграл здесь легко (после Л.~Эйлера) вычисляется с помощью вычетов и равен  $\pi$ 
 \cite[гл.~V, 74, Пример~3]{LSh}. Значит  последнее неравенство совпадает с неравенством \eqref{est:inf}.  Это завершает доказательство Гипотезы \ref{con:1} для $\lambda\leq 1$.
\end{proof}

\section{Эквивалентные версии Гипотезы \ref{con:1}}

\begin{lemma}[{\cite[Предложение 5.1]{Kond}}]\label{lem:6} Функция $S \colon [0, +\infty) \to \R$ с $S(0)=0$ --- возрастающая и выпуклая относительно  $\log$, если и только если найдется возрастающая функция $s\colon [0, +\infty) \to [0, +\infty)$, дающая представление
\begin{equation*}
	S(x)=\int_0^x \frac{s(t)}{t} \d t.
\end{equation*}
\end{lemma}
Используя Лемму \ref{lem:6} вместе с Леммой \ref{lem:1} для условия $S(0)=0$, можем записать интеграл из \eqref{conj:con} в виде
\begin{equation*}
\int_0^1 S(tx)(1-x^2)^{n-2}x \d x =-\frac1{2(n-1)} 
\int_0^1 \left(\int_0^{tx}\frac{s(\tau)}{\tau} \d \tau\right) \d (1-x^2)^{n-1}.
\end{equation*}
Интегрирование по частям дает равенство
\begin{equation*}
\int_0^1 S(tx)(1-x^2)^{n-2}x \d x =\frac1{2(n-1)} 
\int_0^1 \frac{s(tx)}{x} (1-x^2)^{n-1} \d x.
\end{equation*}
Аналогично для интеграла \eqref{conj:est} имеем
\begin{equation*}
	\int_0^{+\infty} 	S(t)\frac{t^{2\lambda-1}}{(1+t^{2\lambda})^2} \d t
= \frac1{2\lambda} \int_0^{+\infty} 	\frac{s(t)}{t}\,\frac{\d t}{t(1+t^{2\lambda})} \,.
\end{equation*}
Таким образом, соотношения \eqref{conj:con} и \eqref{conj:est} переходят соотв. в неравенства
\begin{subequations}
\begin{align*}
\frac1{2(n-1)} 
\int_0^1 \frac{s(tx)}{x} (1-x^2)^{n-1} \d x &\leq t^{\lambda}\, 
\; \text{ для всех } \; 0\leq t < +\infty \,,  \\ 
\frac1{2\lambda} \int_0^{+\infty} 	\frac{s(t)}{t}\,\frac{\d t}{1+t^{2\lambda}}\leq \frac{\pi (n-1)}{2\lambda}\prod_{k=1}^{n-1} &\Bigl(1+\frac{\lambda}{2k}\Bigr)= \frac{\pi
(n-1)}{\lambda^2} \cdot \frac{1}{\mathrm B (\lambda/2, n)} \, ,
\end{align*}
\end{subequations}
где функция $s \geq 0$ --- возрастающая, а в последнем равенстве использованы те же свойства бета-функции Эйлера \cite[гл.~VII, \S~1]{LSh},  что и в \eqref{with:B}. Но мы не уменьшим общности рассмотрения, если вместо такой произвольной функции $s$ будем рассматривать произвольную возрастающую функцию $h\geq 0$, определенную заменой $h(x^2):=\frac1{4(n-1)}\,s(x)$, $x\geq 0$, что преобразует  последнюю пару соотношений в  
условие и неравенство
\begin{subequations}
\begin{align*}
2 \int_0^1 \frac{h(t^2x^2)}{x} (1-x^2)^{n-1} \d x &\leq (t^2)^{\lambda/2}\, 
\; \text{ для всех }
\; 0\leq t < +\infty \,,  
\\ 
 2\int_0^{+\infty} 	\frac{h(t^2)}{t}\,\frac{\d t}{1+t^{2\lambda}}&\leq
\frac{\pi}{2} \prod_{k=1}^{n-1} \Bigl(1+\frac{\lambda}{2k}\Bigr)
=\frac{\pi}{\lambda}\cdot \frac{1}{\mathrm B (\lambda/2, n)} 	\, ,
\end{align*}
\end{subequations}
Отсюда после замен 
\begin{equation*}
x^2=x', \quad t^2=t', \quad \lambda/2=\alpha>1/2	
\end{equation*}
и переобозначения переменных $x'$ и $t'$ прежними $x$ и $t$ получаем
\begin{subequations}\label{in:sh2}
\begin{align}
 \int_0^1 \frac{h(tx)}{x} \,(1-x)^{n-1} \d x &\leq t^{\alpha}\, 
\; \text{ для всех }
\; 0\leq t < +\infty \,,  \label{in:sh21}\\ 
 \int_0^{+\infty} 	\frac{h(t)}{t}\,\frac{\d t}{1+t^{2\alpha}}&\leq
\frac{\pi}{2} \prod_{k=1}^{n-1} \Bigl(1+\frac{\alpha}{k}\Bigr)=
\frac{\pi}{2\alpha} \cdot \frac{1}{\mathrm B (\alpha, n)}\, .
\label{in:sh2i}
\end{align}
\end{subequations}
Таким образом, Гипотезе \ref{con:1} с $\lambda >1$ эквивалентна более простая
\begin{Conjec}\label{Hyp2} Пусть $\alpha >1/2$. Для любой возрастающей функции $h\geq 0$ на $[0,+\infty)$ из условия \eqref{in:sh21} следует неравенство \eqref{in:sh2i}.
\end{Conjec}

Возможна еще одна версия рассмотренных гипотез. Во-первых отметим, что из  некоторых соображений  достаточно доказывать наши гипотезы для гладких функций. Так, можем предполагать, что функция $h$ в Гипотезе \ref{Hyp2}
непрерывно дифференцируема, т.\,е. $q:=h'\geq 0$ на $(0,+\infty)$. Тогда интегрированием по частям
получается эквивалентная Гипотезам \ref{con:1} и \ref{Hyp2}
\begin{Conjec} Пусть $\alpha >1/2$. Если $q$ --- положительная непрерывная функция на $[0,+\infty)$, то из
условия
\begin{equation}\label{verq:i}
\int_0^1 \Bigl(\,\int_x^1 (1-y)^{n-1} \frac{\d y}{y}\Bigr)q(tx)\d x
\leq t^{\alpha-1}\, \; \text{ для всех }
\; 0\leq t < +\infty	
\end{equation}
следует оценка
\begin{equation*}
	 \int_0^{+\infty} q(t)\log \Bigl(1+\frac1{t^{2\alpha}}\Bigr) \d t\leq
\pi \alpha \prod_{k=1}^{n-1} \Bigl(1+\frac{\alpha}{k}\Bigr)=
 \frac{\pi}{\mathrm B (\alpha, n)}\,.
\end{equation*}
\end{Conjec}
Внутренний интеграл в левой части \eqref{verq:i} легко вычисляется и может быть заменен на сумму:
\begin{equation*}
\int_x^1 (1-y)^{n-1} \frac{\d y}{y}=
-\log x +\sum_{k=1}^{n-1}\frac{(-1)^k}{k}\,(1-x^k).	
\end{equation*}

\section{Некоторые оценки интеграла из \eqref{conj:est} при условии \eqref{conj:con}}

В этом последнем разделе мы приведем некоторые оценки, которые не ``дотягивают'' до Гипотезы \ref{con:1}, но представляют некоторый интерес. Первая из них сразу следует из заключения eqref{est:S} Леммы \ref{lem:1} и цепочки равенств \eqref{eq:Sl}.
\begin{proposition} Пусть $S\in \Inc^+\bigl([0,+\infty)\bigr)$, $\lambda \geq 0$ и для 
$2\leq n\in \N$ выполнено условие \eqref{conj:con}. Тогда справедлива оценка
\begin{equation}\label{est:S0}
\int_0^{+\infty} S(t)\frac{t^{2\lambda-1}}{(1+t^{2\lambda})^2} \d t\leq
\frac{\pi}{2}\cdot
\frac{n-1}{\lambda}\left(1+\frac12
\cdot \frac{\lambda}{n-1}\right)^{n-1}
\Bigl(1+2\cdot \frac{n-1}{\lambda}\Bigr)^{\lambda/2} .	
\end{equation}
\end{proposition}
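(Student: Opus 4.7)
The plan is to combine the pointwise majorant of $S$ that is already established in Lemma \ref{lem:1} with the explicit integral evaluation that was carried out in \eqref{eq:Sl}. Concretely, under hypothesis \eqref{conj:con} Lemma \ref{lem:1} asserts
\begin{equation*}
S(t) \le C_{\lambda,n}\, t^{\lambda}, \qquad
C_{\lambda,n}:=2(n-1)\Bigl(1+\tfrac{\lambda}{2(n-1)}\Bigr)^{n-1}
\Bigl(1+\tfrac{2(n-1)}{\lambda}\Bigr)^{\lambda/2},
\end{equation*}
for every $t\ge 0$. Note that the hypotheses of Lemma \ref{lem:1} (namely $S\in\Inc^+([0,+\infty))$ and $\lambda\ge 0$) coincide with those of the present Proposition, so no extra regularity of $S$ (in particular, no $\log$-convexity) is required.

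First I would insert this majorant directly under the integral sign on the left of \eqref{est:S0}, which reduces the task to estimating
\begin{equation*}
\int_0^{+\infty} S(t)\,\frac{t^{2\lambda-1}}{(1+t^{2\lambda})^2}\,\d t
\;\le\; C_{\lambda,n}\int_0^{+\infty}\frac{t^{3\lambda-1}}{(1+t^{2\lambda})^2}\,\d t.
\end{equation*}
The remaining integral is the elementary one already evaluated implicitly in the chain \eqref{eq:Sl}: the substitution $s=t^{\lambda}$ followed by $s=\tan\theta$ (equivalently, reading off $S_{\lambda,n}(t)=c_{\lambda,n}t^{\lambda}$ from that computation and dividing out the constant $c_{\lambda,n}$) gives
\begin{equation*}
\int_0^{+\infty}\frac{t^{3\lambda-1}}{(1+t^{2\lambda})^2}\,\d t=\frac{\pi}{4\lambda}.
\end{equation*}

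Finally, multiplying the two results together,
\begin{equation*}
\int_0^{+\infty} S(t)\,\frac{t^{2\lambda-1}}{(1+t^{2\lambda})^2}\,\d t
\;\le\; C_{\lambda,n}\cdot\frac{\pi}{4\lambda}
=\frac{\pi}{2}\cdot\frac{n-1}{\lambda}
\Bigl(1+\tfrac12\cdot\tfrac{\lambda}{n-1}\Bigr)^{n-1}
\Bigl(1+2\cdot\tfrac{n-1}{\lambda}\Bigr)^{\lambda/2},
\end{equation*}
which is exactly \eqref{est:S0}. There is essentially no obstacle here: the entire argument is two lines once Lemma \ref{lem:1} and formula \eqref{eq:Sl} are in hand. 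The only thing one must verify carefully is the bookkeeping of the constant — namely that the factor $2(n-1)$ in $C_{\lambda,n}$ combines with $\pi/(4\lambda)$ to give the claimed prefactor $\tfrac{\pi}{2}\cdot\tfrac{n-1}{\lambda}$ — and the implicit assumption $\lambda>0$ needed to make the right-hand side of \eqref{est:S0} finite.
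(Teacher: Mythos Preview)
Your proposal is correct and matches the paper's own argument exactly: the paper simply states that the estimate follows from the pointwise bound of Lemma~\ref{lem:1} together with the computation \eqref{eq:Sl}, which is precisely the two-line derivation you have written out. Your remark that the inequality is only meaningful for $\lambda>0$ is also appropriate, since the stated hypothesis $\lambda\ge 0$ must tacitly exclude $\lambda=0$ for the right-hand side of \eqref{est:S0} to be finite.
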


По схеме доказательства последнего неравенства в \cite[Основная лемма]{Kh99} может быть установлено
\begin{proposition} В условиях Гипотезы\/ {\rm \ref{con:1}} справедлива оценка
\begin{equation}\label{conj:est2}
\int_0^{+\infty} S(t)\frac{t^{2\lambda-1}}{(1+t^{2\lambda})^2} \d t\leq  \frac{\pi (n-1)}{2\lambda}\prod_{k=1}^{n-1} b\Bigl(\frac{\lambda}{2k}\Bigr),
\end{equation}
где функция $b\colon [0,+\infty)\to \R$ определена по правилу

\begin{equation*}
b(x):=\begin{cases}
e^x\quad &\text{ при }\; x\leq 1,\\
ex\quad &\text{ при }\; x>1,
\end{cases}	
\end{equation*}
и удовлетворяет неравенству $b(x)\leq e(1+x)$ при всех \; $x\geq 0$.
\end{proposition}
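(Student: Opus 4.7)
The bound \eqref{conj:est2} is strictly sharper than that of the preceding proposition in some regimes (for instance, at $n=2,\lambda=2$ the prefactor improves from $4$ to $e$), so one cannot obtain it by merely tightening the pointwise estimate of Lemma~\ref{lem:1}. The remark preceding the statement attributes it to unpublished work on the main inequality of \cite{Kh99}; what follows is the architecture I believe such an argument is naturally forced into.

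The presence of a product of $n-1$ factors, one for each $k\in\{1,\dots,n-1\}$, strongly suggests induction on $n$ with one factor $b(\lambda/(2k))$ produced per step. My plan is to construct, for each $k$, a \emph{reduction operator} $\mathcal R_k$ that takes any $S\in\Inc^+([0,+\infty))$ satisfying \eqref{conj:con} at level $n=k+1$ and returns a function $\mathcal R_k S\in\Inc^+([0,+\infty))$ that satisfies \eqref{conj:con} at level $n=k$ (with the same $\lambda$), and that obeys the cost inequality
\[
\int_0^{+\infty} S(t)\varphi_\lambda(t)\d t \;\leq\; b\bigl(\lambda/(2k)\bigr)\int_0^{+\infty}(\mathcal R_k S)(t)\varphi_\lambda(t)\d t.
\]
Iterating from $k=n-1$ down to $k=1$ would then yield the product $\prod_{k=1}^{n-1}b(\lambda/(2k))$, while the remaining integral at the base level contributes the factor $\pi(n-1)/(2\lambda)$ in agreement with the extremizer calculation of \eqref{eq:Sl}. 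A natural candidate for $\mathcal R_k$ is a Mellin-type averaging against the weight $(1-x^2)^{k-1}x$---precisely the factor separating the kernels at levels $k$ and $k+1$ in the Beta-function identity used in \eqref{with:B}---normalized so that $\mathcal R_k$ maps the extremal function $S_{\lambda,k+1}$ to a multiple of $S_{\lambda,k}$, which should pin it down up to scaling.

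The main obstacle is verifying that such an $\mathcal R_k$ achieves the \emph{sharp} cost $b(\lambda/(2k))$ rather than merely the looser bound $(1+\lambda/(2k))\cdot(1+2k/\lambda)^{\lambda/(2k)}$ that a one-shot Lemma~\ref{lem:1}-style optimization would yield. The piecewise structure of $b$---exponential on $[0,1]$, linear on $[1,+\infty)$---should arise from a one-dimensional extremal problem implicit in the construction of $\mathcal R_k$, with the threshold $\lambda/(2k)=1$ marking the passage between an interior and a boundary optimum. Carrying this step out rigorously---so that $\mathcal R_k$ simultaneously preserves monotonicity and $\log$-convexity, converts \eqref{conj:con} at level $k+1$ into \eqref{conj:con} at level $k$, and attains the sharp constant---is where the unpublished techniques of \cite{Kh99} are essential and where the entire difficulty of the plan resides.
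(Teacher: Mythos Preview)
The paper does not prove this proposition: the sentence preceding it merely asserts that the bound follows from a more careful analysis of the main inequality in \cite{Kh99}, and no argument is supplied in the text. There is therefore no in-paper proof to compare your proposal against.

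Your proposal is not a proof but an outline, and you are candid about this. You identify the decisive step---building reduction operators $\mathcal R_k$ that preserve the structural hypotheses on $S$, downgrade the constraint \eqref{conj:con} from level $k+1$ to level $k$, and do so at the sharp cost $b(\lambda/(2k))$---and then explicitly say that carrying it out ``is where the entire difficulty of the plan resides'' and requires techniques you do not provide. That is a genuine gap. The product structure does invite an inductive descent, and the Beta-function relation in \eqref{with:B} does hint at the right averaging kernel, but nothing in the proposal shows that such an $\mathcal R_k$ exists with the stated cost, nor that the piecewise form of $b$ actually emerges from the one-dimensional extremal problem you allude to. A further loose end: your base case is supposed to contribute the factor $\pi(n-1)/(2\lambda)$, yet after $n-1$ reductions the base level should no longer carry any dependence on $n$; as stated, the bookkeeping does not close. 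Absent these missing pieces, the outline does not establish anything beyond the one-shot bound \eqref{est:S0} of the preceding proposition.
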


Обсуждения Гипотезы \ref{con:1} приведены также в 
\cite{Kh_arXiv} и \cite{Kh_uns}. 
\bigskip

\end{document}